\title[The inversion number and the major index]{The inversion number and the major index\\ are asymptotically jointly normally distributed\\ on words}
\author{Marko Thiel}
\newtheorem{lemma}{Lemma}
\newtheorem{theorem}{Theorem}
\newtheorem{claim}{Claim}
\newtheorem{corollary}{Corollary}
\begin{document}

\begin{abstract}
In a recent paper \cite{baxter}, Baxter and Zeilberger show that the two most important Mahonian statistics, the inversion number and the major index, are asymptotically independently normally distributed on permutations. In another recent paper \cite{canfield}, Canfield, Janson and Zeilberger prove the result, already known to statisticians, that the Mahonian distribution is asymptotically normal on words. This leaves one question unanswered: What, asymptotically, is the joint distribution of the inversion number and the major index on words? We answer this question by establishing convergence to a bivariate normal distribution. 
\end{abstract}
\maketitle
\section{Introduction}
Let $S_n$ be the set of permutations of $n$ objects, that is the set of bijections from $[n]=\{1,2,\ldots,n\}$ to itself. Define the inversion number as $inv(\pi)=|\{(i,j)\in[n]\times[n]\mid i<j \text{ and } \pi(i)>\pi(j)\}|$ and the major index as $\sum_{i\in Des(\pi)}i$, where $Des(\pi)=\{i\in[n-1]\mid\pi(i)>\pi(i+1)\}$ is the descent set of $\pi$. It is a classical result due to MacMahon \cite{macmahon} that these have the same distribution, termed the Mahonian distribution in his honour. Their probabilty generating function with respect to the uniform probability on $S_n$ is
\begin{equation*}
\sum_{\pi\in S_n}q^{inv(\pi)}\Pr(\pi)=\sum_{\pi\in S_n}q^{maj(\pi)}\Pr(\pi)=\frac{1}{n!}\prod_{i=1}^n\frac{1-q^i}{1-q}\text{.}
\end{equation*}
There is a bijection $\phi$ from $S_n$ to itself due to Foata \cite{foata} that satisfies $inv(\phi(\pi))=maj(\pi)$, again proving that $inv$ and $maj$ are equidistributed on $S_n$. This bijection fixes the last letter of a permutation, so $inv$ and $maj$ are even equidistributed on $S_{n,i}=\{\pi\in S_n\mid\pi(n)=i\}$ for all $i\in[n]$.
\\
\\
In a recent paper \cite{baxter}, Baxter and Zeilberger show that $inv$ and $maj$ are asymptotically jointly independently normally distributed on $S_n$, that is
\begin{multline*}
\Pr\left(\frac{inv-\mu}{\sigma}\leq x \text{ and } \frac{maj-\mu}{\sigma}\leq y\right)\longrightarrow \frac{1}{2\pi}\int_{-\infty}^x e^{-t^2/2}\mathrm{d}t\int_{-\infty}^y e^{-t^2/2}\mathrm{d}t\quad \\ \text{ as $n$}\rightarrow\infty\text{,}
\end{multline*}
where $\mu=\mu_n=\mathbb{E}(inv)=\mathbb{E}(maj)$ is the mean and $\sigma^2=\sigma_n^2=\mathbb{E}((inv-\mu)^2)=\mathbb{E}((maj-\mu)^2)$ is the variance of the Mahonian distribution.
\\
\\
A simple generalization of permutations are words. A word of length $n$ is just a function $w:[n]\rightarrow[d]$. It can be regarded as a permutation of a multiset, sometimes called a \textit{permatution}. Now the inversion number and the major index can be defined for words in the same way as for permutations. Foata's bijection $\phi$ also extends to words \cite{foata}, so if $A$ is a multiset, and $S_A$ the set of permatutions of it, $inv$ and $maj$ are equidistributed on $S_A$. In fact, as for permutations, they are even equidistributed on $S_{A,i}=\{w\in S_A\mid w\big(\sum_{j=1}^da_j\big)=i\}$.
If $A=\{1^{a_1},2^{a_2},\dots,d^{a_d}\}$ is the multiset containing $a_1$ occurences of 1, $a_2$ occurences of 2, \dots, $a_d$ occurences of $d$, their probability generating function on $S_A$ is
\begin{equation*}
\sum_{w\in S_A}q^{inv(w)}\Pr(w)=\sum_{w\in S_A}q^{maj(w)}\Pr(w)=\frac{\binom{a_1+\cdots+a_d}{a_1,\ldots,a_d}_q}{\binom{a_1+\cdots+a_d}{a_1,\ldots,a_d}}\text{,}
\end{equation*}
where $\binom{a_1+\cdots+a_d}{a_1,\ldots,a_d}_q=\frac{[a_1+\cdots+a_d]_q!}{[a_1]_q!\cdots[a_d]_q!}$ is the $q$-multinomial coefficient, defined in terms of the $q$-factorials $[n]_q!=[1]_q\cdots[n]_q$ and the $q$-numbers $[n]_q=(1-q^n)/(1-q)$.
\\
\\
In a recent paper \cite{canfield}, Canfield, Janson and Zeilberger use methods from experimental mathematics as well as Fourier analytic techniques to give multiple proofs of the fact, already known to statisticians, that the Mahonian distribution is asymptotically normal on $S_A$ as well. This leaves one question unanswered: What, asymptotically, is the joint probability distribution of $inv$ and $maj$ on $S_A$?
\section{The Main Result}
\begin{theorem}
The inversion number and the major index are asymptotically jointly normally distributed on words. That is, if $A$ is the multiset containing $m_1a$ occurences of 1, $m_2a$ occurences of 2, \dots, $m_da$ occurences of $d$, then $(\frac{inv-\mu}{\sigma},\frac{maj-\mu}{\sigma})$ on $S_A$ tends to a bivariate normal distribution as $a\rightarrow\infty$. The correlation coefficient of that distribution is $$\frac{\sum_{1\leq i<j\leq d}m_im_j^2-\sum_{1\leq i<j\leq d}m_i^2m_j}{\sum_{1\leq i<j\leq d}(m_im_j^2+m_i^2m_j)+2\sum_{1\leq i<j<k\leq d}m_im_jm_k}\text{.}$$
\end{theorem}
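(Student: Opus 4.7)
The plan is to follow a two--pronged strategy: establish joint asymptotic normality via the Cramér--Wold device, then compute the asymptotic correlation coefficient directly. Since $inv$ and $maj$ are equidistributed on $S_A$ by Foata's bijection, they share a common mean $\mu$ and variance $\sigma^2$, and each is individually asymptotically normal by Canfield--Janson--Zeilberger. The task therefore reduces to proving that every linear combination $\alpha\,inv+\beta\,maj$ is asymptotically normal and that $\mathrm{Cov}(inv,maj)/\sigma^2$ converges to the stated correlation.

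The key observation for the normality step is that both statistics can be written as sums of inversion indicators: setting $X_{ij}=\mathbb{1}[w(i)>w(j)]$ for $i<j$, we have $inv=\sum_{i<j}X_{ij}$ and $maj=\sum_i iX_{i,i+1}$, so
\[ \alpha\,inv+\beta\,maj=\sum_{i<j}c_{ij}X_{ij},\qquad c_{ij}=\alpha+\beta i\,\mathbb{1}[j=i+1]. \]
I would prove the CLT for this weighted sum by the method of moments, expanding $\mathbb{E}\bigl[\bigl(\sum_{i<j}c_{ij}(X_{ij}-\mathbb{E}X_{ij})\bigr)^{2k}\bigr]$ into sums over $2k$-tuples of index pairs. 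After centering and scaling by the standard deviation, the leading contributions should come from configurations that partition the $2k$ pairs into $k$ overlapping blocks of two, matching the Wick pairing that produces the Gaussian $2k$-th moment; configurations with more interaction between blocks should be of lower order, and odd moments should vanish to leading order by a parallel counting argument. This is essentially the same strategy used for the marginal asymptotic normality of $inv$ and $maj$, adapted to an asymmetric weighting $c_{ij}$.

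For the correlation coefficient, I would compute $\mathrm{Cov}(inv,maj)=\sum_{i<j,\,k}k\,\mathrm{Cov}(X_{ij},X_{k,k+1})$ by a case analysis on the intersection pattern of $\{i,j\}$ with $\{k,k+1\}$. The dominant contribution as $a\to\infty$ should come from configurations of four distinct positions, where the joint law of $(w(i),w(j),w(k),w(k+1))$ is asymptotically that of four i.i.d.\ draws with $\Pr(w=\ell)=m_\ell/m$. Summing $k\,\mathrm{Cov}(X_{ij},X_{k,k+1})$ over these configurations, with the orderings of $\{i,j\}$ and $\{k,k+1\}$ carefully enumerated, should yield $\sum_{i<j}(m_im_j^2-m_i^2m_j)$ as the numerator. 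An analogous computation of $\mathrm{Var}(inv)$ should yield the denominator $\sum_{i<j}(m_im_j^2+m_i^2m_j)+2\sum_{i<j<k}m_im_jm_k$. A useful sanity check: in the balanced case $m_1=\cdots=m_d$ the numerator vanishes, recovering asymptotic independence consistent with Baxter--Zeilberger.

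The main obstacle will be the CLT for $\sum_{i<j}c_{ij}X_{ij}$, because the weights $c_{ij}$ are dramatically asymmetric---constant on the vast majority of pairs but scaling linearly in $i$ along the nearest--neighbour ``path'' $j=i+1$---so one must verify that these position--dependent weights along a sparse subset do not produce spurious leading contributions in the moment expansion. Should the direct moment approach become unwieldy, a promising alternative is the coupling that sends a uniformly random permutation $\pi\in S_n$ to the word $w\in S_A$ defined by recording the value class of $\pi(i)$ under the partition of $[n]$ into consecutive blocks of sizes $m_1a,\ldots,m_da$. Under this coupling $inv(\pi)=inv(w)+\sum_k inv(\pi|_{P_k})$, with a similar but more delicate decomposition for $maj(\pi)$; combining Baxter--Zeilberger's bivariate CLT on $S_n$ with independent bivariate CLTs for the intra--class permutations $\pi|_{P_k}$ and inverting the resulting joint law should yield the required joint normality on $S_A$.
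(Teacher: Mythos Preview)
Your strategy is genuinely different from the paper's. The paper never invokes Cram\'er--Wold or a direct combinatorial moment expansion; instead it derives a recurrence for the mixed factorial moments $\mathbb{E}\bigl[(inv-\mu)^{\underline r}(maj-\mu)^{\underline s}\bigr]$ on $S_{A,i}$ by peeling off the last letter, shows these are rational functions in $a_1,\ldots,a_d$ of controlled degree, and then matches the leading terms of the recurrence against the Isserlis recurrence $(r+s)\mathbb{E}(X^rY^s)=\mathbb{E}(X^2)r(r-1)\mathbb{E}(X^{r-2}Y^s)+\cdots$ for bivariate normal moments. Your covariance computation via case analysis on $\mathrm{Cov}(X_{ij},X_{k,k+1})$ is sound in outline and would recover the stated correlation; this part is not in dispute.

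The gap is in the CLT step. First, a minor point: since you intend to prove convergence of moments anyway, Cram\'er--Wold buys nothing---controlling all moments of every $\alpha\,inv+\beta\,maj$ is literally equivalent to controlling all mixed moments $\mathbb{E}\bigl[(inv-\mu)^r(maj-\mu)^s\bigr]$, which is what the paper attacks directly. More substantively, your Wick-pairing heuristic is asserted, not proved, and the asymmetry you yourself flag is the whole difficulty: in the $2k$-th moment you must bound contributions of non-pairing configurations built from a mixture of $\binom{n}{2}$ weight-$O(1)$ $inv$-edges and $n-1$ weight-$O(n)$ $maj$-edges, while simultaneously tracking the $O(1/n)$ dependencies among $w(i_1),\ldots,w(i_v)$ coming from sampling without replacement. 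None of this is carried out. Your fallback coupling has a concrete obstruction: while $inv(\pi)=inv(w)+\sum_k inv(\pi|_{P_k})$ holds exactly, the analogue for $maj$ fails, since $maj(\pi)-maj(w)=\sum_{i:\,w(i)=w(i+1)} i\,[\pi(i)>\pi(i+1)]$ depends on \emph{where} the class-$k$ letters land in $[n]$, not merely on the intra-class permutation $\pi|_{P_k}$. Hence the ``intra-class'' contribution to $maj$ is not a function of the $\pi|_{P_k}$ alone, the summands are not independent of $w$, and you cannot subtract off independent bivariate normals to invert the joint law as you propose.
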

\section{Proof}
To prove this, we use the method of moments. Recall that two random variables $X$ and $Y$ jointly converge to a bivariate normal distribution if and only if their mixed moments $\mathbb{E}(X^rY^s)$ converge to the mixed moments of the bivariate normal distribution \cite{feller}. We shall instead consider factorial moments $\mathbb{E}(X^{\underline{r}}Y^{\underline{r}})$, with the factorial powers $x^{\underline{k}}=x(x-1)\cdots(x-k+1)$, derive a recurrence for them, and check that up to the leading terms it agrees with a well-known recurrence for the mixed moments of a bivariate normal distribution, thereby proving the result.
\\
\\
Consider the effect that removing the last element from a word has on its inversion number and its major index. The inversion number will decrease by the number of letters smaller than the last letter in the word, and the major index will decrease only if the second to last letter was bigger than the last, by the position of the second to last letter. In the language of generating functions this is
\begin{equation}
F(\textbf{a},i)(p,q)=p^{\sum_{j=i+1}^da_i}\bigg(\sum_{j=1}^iF(\textbf{a}-\textbf{e}_i,j)(p,q)+q^{\sum_{j=1}^da_j-1}\sum_{j=i+1}^dF(\textbf{a}-\textbf{e}_i,j)(p,q)\bigg)
\end{equation}
where $F(\textbf{a},i)(p,q)=F((a_1,a_2,\dots,a_d),i)(p,q)=\sum_{w\in S_{A,i}}p^{inv(w)}q^{maj(w)}$ is the double generating function of $inv$ and $maj$ on the set of permatutions of the multiset $A=\{1^{a_1},2^{a_2},\dots,d^{a_d}\}$ that end with the letter $i$, and $\textbf{e}_i$ is the $i$-th unit vector, having a 1 in its $i$-th coordinate and zeroes in all others.
The mean of $inv$ on $S_A$ is easily seen to be $e_2(\textbf{a})/2$, where $e_2(\textbf{a})=\sum_{1\leq i<j\leq d}a_ia_j$ is the second elementary symmetric polynomial in $a_1,\dots,a_d$, and $maj$ has the same distribution, so the centralized probability generating function of $inv$ and $maj$ on $S_{A,i}$ is
\begin{equation*}
G(\textbf{a},i)(p,q)=\frac{F(\textbf{a},i)(p,q)}{\binom{\sum_{j=1}^da_j-1}{\textbf{a}-\textbf{e}_i}(pq)^{\frac{e_2(\textbf{a}-\textbf{e}_i)}{2}+\sum_{j=i+1}^da_j}}\text{,}
\end{equation*}
where $\binom{\sum_{j=1}^da_j}{\textbf{a}}=\binom{\sum_{j=1}^da_j}{a_1,\dots,a_d}$ is the multinomial coefficient. So the recurrence (1) translates to
\begin{multline*}
\binom{\sum_{j=1}^da_j}{\textbf{a}}(pq)^{\frac{e_2(\textbf{a})}{2}+\sum_{j=i+1}^da_j}G(\textbf{a}+\textbf{e}_i,i)(p,q)\\
=p^{\sum_{j=i+1}^da_j}\bigg(\sum_{j=1}^i\binom{\sum_{k=1}^da_k-1}{\textbf{a}-\textbf{e}_j}(pq)^{\frac{e_2(\textbf{a}-\textbf{e}_j)}{2}+\sum_{k=j+1}^da_k}G(\textbf{a},j)(p,q)\\
+q^{\sum_{j=1}^da_j-1}\sum_{j=i+1}^d\binom{\sum_{k=1}^da_k-1}{\textbf{a}-\textbf{e}_j}(pq)^{\frac{e_2(\textbf{a}-\textbf{e}_j)}{2}+\sum_{k=j+1}^da_k}G(\textbf{a},j)(p,q)\bigg)\text{,}
\end{multline*}
which simplifies to
\begin{multline}
G(\textbf{a}+\textbf{e}_i,i)(p,q)\\
=\sum_{j=1}^{i}\frac{a_j}{\sum_{k=1}^da_k}p^{\frac{1}{2}(\sum_{k=j+1}^da_k-\sum_{k=1}^{j-1}a_k)}q^{\frac{1}{2}(\sum_{k=j+1}^{i}a_k-\sum_{k=1}^{j-1}a_k-\sum_{k=i+1}^{d}a_k)}G(\textbf{a},j)(p,q)\\
+\sum_{j=i+1}^{d}\frac{a_j}{\sum_{k=1}^da_k}p^{\frac{1}{2}(\sum_{k=j+1}^da_k-\sum_{k=1}^{j-1}a_k)}q^{\frac{1}{2}(\sum_{k=j+1}^{d}a_k+\sum_{k=1}^{i}a_k-\sum_{k=i+1}^{j-1}a_k)}G(\textbf{a},j)(p,q)\text{.}
\end{multline}
Let $FM(\textbf{a},i,r,s)=\mathbb{E}\big((inv-\mu)^{\underline{r}}(maj-\mu)^{\underline{s}}\big)$ be the $(r,s)$-th mixed factorial moment of the random variables $inv-\mu$ and $maj-\mu$, where $\mu=\mathbb{E}(inv)=\mathbb{E}(maj)$, on $S_{A,i}$. Then
\begin{equation*}
\frac{\partial^r}{\partial p^r}\frac{\partial^s}{\partial q^s}G(\textbf{a},i)(p,q)\Big|_{p=q=1}=FM(\textbf{a},i,r,s)\text{.}
\end{equation*}
So we get the Taylor series
\begin{equation*}
G(\textbf{a},i)(1+p,1+q)=\sum_{r,s=0}^\infty\frac{FM(\textbf{a},i,r,s)}{r!s!}p^rq^s\text{.}
\end{equation*}
Thus (2) translates to
\begin{multline*}
\sum_{r,s=0}^\infty\frac{FM(\textbf{a}+\textbf{e}_i,i,r,s)}{r!s!}p^rq^s\\
\begin{aligned}
=\sum_{j=1}^i\frac{a_j}{\sum_{k=1}^da_k}(1+p)^{\frac{1}{2}(\sum_{k=j+1}^da_k-\sum_{k=1}^{j-1}a_k)}(1+q)^{\frac{1}{2}(\sum_{k=j+1}^{i}a_k-\sum_{k=1}^{j-1}a_k-\sum_{k=i+1}^{d}a_k)}\\
\cdot\sum_{r,s=0}^\infty\frac{FM(\textbf{a},j,r,s)}{r!s!}p^rq^s\\
+\sum_{j=i+1}^d\frac{a_j}{\sum_{k=1}^da_k}(1+p)^{\frac{1}{2}(\sum_{k=j+1}^da_k-\sum_{k=1}^{j-1}a_k)}(1+q)^{\frac{1}{2}(\sum_{k=j+1}^{d}a_k+\sum_{k=1}^{i}a_k-\sum_{k=i+1}^{j-1}a_k)}\\
\cdot\sum_{r,s=0}^\infty\frac{FM(\textbf{a},j,r,s)}{r!s!}p^rq^s\text{.}
\end{aligned}
\end{multline*}
Now expand the powers using the binomial theorem and compare coefficients to obtain
\begin{multline*}
FM(\textbf{a}+\textbf{e}_i,i,r,s)\\
\begin{aligned}
=\sum_{j=1}^i\frac{a_j}{\sum_{k=1}^da_k}&\sum_{r'=0}^r\sum_{s'=0}^s\binom{\frac{1}{2}(\sum_{k=j+1}^da_k-\sum_{k=1}^{j-1}a_k)}{r-r'}\\
\cdot& \binom{\frac{1}{2}(\sum_{k=j+1}^{i}a_k-\sum_{k=1}^{j-1}a_k-\sum_{k=i+1}^{d}a_k)}{s-s'}r^{\underline{r-r'}}s^{\underline{s-s'}}FM(\textbf{a},j,r',s')\\
+\sum_{j=i+1}^d&\frac{a_j}{\sum_{k=1}^da_k}\sum_{r'=0}^r\sum_{s'=0}^s\binom{\frac{1}{2}(\sum_{k=j+1}^da_k-\sum_{k=1}^{j-1}a_k)}{r-r'}\\
\cdot& \binom{\frac{1}{2}(\sum_{k=j+1}^{d}a_k+\sum_{k=1}^{i}a_k-\sum_{k=i+1}^{j-1}a_k)}{s-s'}r^{\underline{r-r'}}s^{\underline{s-s'}}FM(\textbf{a},j,r',s')\text{.}
\end{aligned}
\end{multline*}
So
\begin{multline}
\Big(\sum_{j=1}^da_j\Big)FM(\textbf{a}+\textbf{e}_i,i,r,s)-\sum_{j=1}^da_jFM(\textbf{a},j,r,s)\\
\begin{aligned}
=\sum_{j=1}^ia_j&\sum_{\{(r',s')\mid r'<r \text{ or }s'<s\}}\binom{\frac{1}{2}(\sum_{k=j+1}^da_k-\sum_{k=1}^{j-1}a_k)}{r-r'}\\
\cdot& \binom{\frac{1}{2}(\sum_{k=j+1}^{i}a_k-\sum_{k=1}^{j-1}a_k-\sum_{k=i+1}^{d}a_k)}{s-s'}r^{\underline{r-r'}}s^{\underline{s-s'}}FM(\textbf{a},j,r',s')\\
+&\sum_{j=i+1}^da_j\sum_{\{(r',s')\mid r'<r \text{ or }s'<s\}}\binom{\frac{1}{2}(\sum_{k=j+1}^da_k-\sum_{k=1}^{j-1}a_k)}{r-r'}\\
\cdot& \binom{\frac{1}{2}(\sum_{k=j+1}^{d}a_k+\sum_{k=1}^{i}a_k-\sum_{k=i+1}^{j-1}a_k)}{s-s'}r^{\underline{r-r'}}s^{\underline{s-s'}}FM(\textbf{a},j,r',s')\text{.}
\end{aligned}
\end{multline}
To really profit from this recurrence, we need the following lemma.
\begin{lemma}
$FM(\textbf{a},i,r,s)$ is a rational function in $a_1,\dots,a_d$, with denominator $\big(\sum_{j=1}^da_j-1\big)^{\underline{4(r+s)}}$.
\end{lemma}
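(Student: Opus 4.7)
The plan is to proceed by strong induction on $r+s$. The base case $r=s=0$ is trivial: $FM(\textbf{a},i,0,0)=1$, and the claimed denominator $(\sum_{j=1}^d a_j-1)^{\underline{0}}$ is the empty product $1$. Throughout, write $N=\sum_{j=1}^d a_j$.

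For the inductive step, I would first rearrange recurrence~(3) so that the summands with $(r',s')=(r,s)$---those with trivial binomial coefficients $\binom{\cdot}{0}\binom{\cdot}{0}=1$ and trivial falling factorials $r^{\underline{0}}s^{\underline{0}}=1$---are collected on the left. After collapsing the two $j$-sums they produce precisely the left-hand side of the key identity
\begin{equation*}
N\cdot FM(\textbf{a}+\textbf{e}_i,i,r,s)-\sum_{j=1}^d a_j\,FM(\textbf{a},j,r,s)=R_i(\textbf{a};r,s),
\end{equation*}
where $R_i$ is a polynomial-coefficient combination of $FM(\textbf{a},j,r',s')$ with $r'+s'<r+s$. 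By the inductive hypothesis, $R_i$ is a rational function of $\textbf{a}$ whose denominator divides $(N-1)^{\underline{4(r+s)-4}}$. To disentangle the individual $FM(\textbf{a},i,r,s)$, I would then introduce the unconditional moment $M(\textbf{a},r,s):=\sum_{j=1}^d (a_j/N)\,FM(\textbf{a},j,r,s)$, observe that the identity above rearranges to $FM(\textbf{a}+\textbf{e}_i,i,r,s)=M(\textbf{a},r,s)+R_i/N$, shift $\textbf{a}\mapsto\textbf{a}-\textbf{e}_i$, and feed the result back into the definition of $M$. This produces a scalar recurrence for $M(\textbf{a},r,s)$ that unrolls cleanly from the trivial base case in which $\textbf{a}$ is supported on a single coordinate (so that $inv$ and $maj$ vanish identically), after which $FM(\textbf{a},i,r,s)$ is recovered from $M$ by one further application of the shifted identity.

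The main obstacle is the bookkeeping of denominators: a naive unrolling introduces a factor $1/(N-k)$ at each of $O(N)$ steps and would yield a denominator far larger than $(N-1)^{\underline{4(r+s)}}$. The real content of the lemma is therefore the massive cancellation that must occur. I would handle this by promoting the inductive claim to the equivalent statement that $(N-1)^{\underline{4(r+s)}}\cdot FM(\textbf{a},i,r,s)$ is a polynomial in $\textbf{a}$, and then checking that a single application of~(3) preserves this polynomiality---so that at most four new falling-factorial factors need to be absorbed each time $r+s$ increases by one. Tracing the factor~$4$ back to the combined effect of the $1/N$ produced when solving for the top-order term and the half-integer binomials $\binom{\frac{1}{2}(\sum_k a_k)}{r-r'}\binom{\frac{1}{2}(\sum_k a_k)}{s-s'}$ on the right-hand side of~(3) is, I expect, the most delicate part of the argument.
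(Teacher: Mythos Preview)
Your approach is fundamentally different from the paper's, and as it stands it has a genuine gap that I do not see how to close.

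The difficulty is exactly the one you flag but do not resolve. Recurrence~(3) does not express $FM(\mathbf a,i,r,s)$ purely in terms of moments with $r'+s'<r+s$: at the top level it couples $FM(\mathbf a+\mathbf e_i,i,r,s)$ to all of the $FM(\mathbf a,j,r,s)$. A ``single application of~(3)'' therefore does not determine any individual $FM(\cdot,\cdot,r,s)$; it only shifts $\mathbf a$ by one unit. To pin the values down you must unroll in $\mathbf a$ all the way to a degenerate base case, and each step contributes a factor $1/(N-k)$. Your proposal to ``promote the inductive claim'' and check that one step of~(3) preserves polynomiality is circular: you are trying to prove that $(N-1)^{\underline{4(r+s)}}FM(\mathbf a,i,r,s)$ is a polynomial in $\mathbf a$, but to apply~(3) even once you would already need this at the neighbouring value $\mathbf a-\mathbf e_i$, which is part of the same claim at level $(r,s)$. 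The induction on $r+s$ simply does not close without an independent argument controlling the $\mathbf a$-dependence at level $(r,s)$, and you give none. (Your speculation about the half-integer binomials is also off target: those binomials are polynomials in the $a_k$ and contribute nothing to the denominator.)

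The paper sidesteps the recurrence entirely for this lemma. It expands $\mathbb{E}(inv^r\,maj^s)$ on $S_{A,i}$ directly as a sum of Iverson brackets, multiplies out the $r+s$ factors, and groups the resulting terms into finitely many ``scenarios'' according to the order type and adjacency pattern of the at most $2(r+s)$ index positions that appear. For each scenario the number of compatible words is a product of a binomial and a multinomial coefficient; dividing by $|S_{A,i}|=\binom{N-1}{\mathbf a-\mathbf e_i}$ yields in every case a rational function with denominator $(N-1)^{\underline{2l-o}}$, where $2l-o\le 4(r+s)$. This gives the denominator bound for the raw moments at once, and passing to centred and then factorial moments via the binomial theorem and Stirling numbers preserves it. The combinatorial argument is short and delivers the explicit bound directly; recurrence~(3) is used in the paper only afterwards, for the leading-term analysis, where the $\mathbf a$-unrolling problem does not arise.
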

\begin{proof}
We use a combinatorial argument similar to one given by Baxter and Zeilberger for permutations \cite{baxter}. For simplicity, we shall first consider ordinary mixed moments instead of factorial moments. Start with the equation
\begin{multline*}
\mathbb{E}(inv^rmaj^s)=\\
\frac{1}{\binom{\sum_{j=1}^da_j-1}{\textbf{a}-\textbf{e}_i}}\sum_{w\in S_{A,i}}\Bigg(\sum_{j,k=1}^{\sum_{j=1}^da_j}[j<k\text{ and }w(j)>w(k)]\Bigg)^r\\
\cdot\Bigg(\sum_{j,k=1}^{\sum_{j=1}^da_j}[w(j)>w(j+1)\text{ and }k\leq j]\Bigg)^s\text{,}
\end{multline*}
where $[P]=1$ if $P$ is true and $[P]=0$ otherwise. Now expand the powers and move the sum over $S_{A,i}$ and the division by $\binom{\sum_{j=1}^da_j-1}{\textbf{a}-\textbf{e}_i}$ to the right of all the resulting $2(r+s)$ summation symbols. Then there are only finitely many possibilities, independent of $a_1,\dots,a_d$, of choosing the relative order of the $2(r+s)$ indices of summation. Some of these may coincide, so say there are $l$ distinct indices $i_1,\dots,i_l$. Let $O=\{j\in[l]\mid i_j+1=i_{j+1}\}$, so $o=|O|$ is the number of distinct indices whose successor is also an index. Now there are only finitely many possibilities, independent of $a_1,\dots,a_d$, of choosing the relative order of the indices, then picking the set $O$, then specifying whether $i_l=\sum_{j=1}^da_j$, $i_l=\sum_{j=1}^da_j-1$ or $i_l<\sum_{j=1}^da_j-1$, and afterwards choosing the values taken by $w$ on $\{i_1,\dots,i_l\}\cup\{i_1+1,\dots,i_l+1\}$. 
For each of these scenarios, the summand takes the same value, either 0 or 1, on all of the words that belong to that scenario. The number of words belonging to a scenario is $\binom{\sum_{j=1}^da_j-l}{l-o-1}\binom{\sum_{j=1}^da_j-2l+o+1}{\textbf{a}-\textbf{v}}$ if $i_l=\sum_{j=1}^da_j$, $\binom{\sum_{j=1}^da_j-l-1}{l-o-1}\binom{\sum_{j=1}^da_j-2l+o}{\textbf{a}-\textbf{v}}$ if $i_l=\sum_{j=1}^da_j-1$, and $\binom{\sum_{j=1}^da_j-l-1}{l-o}\binom{\sum_{j=1}^da_j-2l+o-1}{\textbf{a}-\textbf{v}-\textbf{e}_i}$ if $i_l<\sum_{j=1}^da_j-1$, where $\textbf{v}$ is the vector of multiplicities of the values taken by $w$ on the positions of the indices and their successors as defined by the scenario.
Here the binomial coefficient counts the number of ways to choose the positions of the indices, while the multinomial coefficient counts the number of ways of choosing the values of $w$ at positions smaller than $\sum_{j=1}^da_j$ that are not an index or the successor of an index.
So the contribution to the sum of that scenario is either zero,
\begin{multline*}
\frac{\binom{\sum_{j=1}^da_j-l}{l-o-1}\binom{\sum_{j=1}^da_j-2l+o+1}{\textbf{a}-\textbf{v}}}{\binom{\sum_{j=1}^da_j-1}{\textbf{a}-\textbf{e}_i}}\text{, }\frac{\binom{\sum_{j=1}^da_j-l-1}{l-o-1}\binom{\sum_{j=1}^da_j-2l+o}{\textbf{a}-\textbf{v}}}{\binom{\sum_{j=1}^da_j-1}{\textbf{a}-\textbf{e}_i}}\text{,}\\
\text{ or }\frac{\binom{\sum_{j=1}^da_j-l-1}{l-o}\binom{\sum_{j=1}^da_j-2l+o-1}{\textbf{a}-\textbf{v}-\textbf{e}_i}}{\binom{\sum_{j=1}^da_j-1}{\textbf{a}-\textbf{e}_i}}\text{.} 
\end{multline*}
But all of these are rational functions with denominator $\big(\sum_{j=1}^da_j-1\big)^{\underline{2l-o}}$, so $\mathbb{E}(inv^rmaj^s)$ is a rational function with denominator $\big(\sum_{j=1}^da_j-1\big)^{\underline{4(r+s)}}$.
From this we can recover $\mathbb{E}((inv-\mu)^r(maj-\mu)^s)$ using the binomial theorem and $FM(\textbf{a},i,r,s)=\mathbb{E}((inv-\mu)^{\underline{r}}(maj-\mu)^{\underline{s}})$ using Stirling numbers \cite{graham}. So $FM(\textbf{a},i,r,s)$ is a rational function with denominator $\big(\sum_{j=1}^da_j-1\big)^{\underline{4(r+s)}}$.
\end{proof}
In fact, computational experiments suggest that the $FM(\textbf{a},i,r,s)$ are even \textit{polynomials} in $a_1,\dots,a_d$, but the author has been unable to prove this.\\
\\
So now we know that the $FM(\textbf{a},i,r,s)$ are of the form $ \frac{P(\textbf{a},i,r,s)}{(\sum_{j=1}^da_j-1)^{\underline{4(r+s)}}}$, where $P(\textbf{a},i,r,s)$ is a polynomial in the variables $a_1,\dots,a_d$. But what is the degree of $P(\textbf{a},i,r,s)$?\\
\\
\begin{lemma}
The degree of $P(\textbf{a},i,r,s)$ is at most $\frac{11}{2}(r+s)$.
\end{lemma}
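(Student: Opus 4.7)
The approach I would take is to establish an analytic size estimate $|FM(a\mathbf{m},i,r,s)| = O_{\mathbf{m}}(a^{3(r+s)/2})$ for each fixed composition $\mathbf{m}$ as $a\to\infty$, and then convert this into the desired polynomial degree bound via a Zariski-density argument. By Lemma~1 we have $P(\mathbf{a},i,r,s) = FM(\mathbf{a},i,r,s) \cdot (\sum_j a_j - 1)^{\underline{4(r+s)}}$, and the denominator is a polynomial of degree $4(r+s)$ in $\mathbf{a}$, so such an estimate yields $|P(a\mathbf{m})| = O_{\mathbf{m}}(a^{11(r+s)/2})$ for every positive integer vector $\mathbf{m}$. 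If $P$ had some degree $D > \frac{11}{2}(r+s)$, its degree-$D$ homogeneous component would have to vanish at every such $\mathbf{m}$, hence identically, contradicting the definition of $D$; therefore $\deg P \leq \frac{11}{2}(r+s)$.

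To obtain the analytic bound on $FM$, I would lean on the marginal moment estimates available from \cite{canfield}. Their proof (by a method of moments, among other approaches) of the asymptotic normality of the Mahonian distribution on $S_A$ yields $\mathbb{E}_{S_A}((inv-\mu_{S_A})^{2k}) \sim (2k-1)!!\,\sigma^{2k}$ as $a\to\infty$ with $\mathbf{a}=a\mathbf{m}$; combined with the classical fact $\sigma^2 = \Theta_{\mathbf{m}}(a^3)$, this gives $\mathbb{E}_{S_A}((inv-\mu_{S_A})^{2k}) = O_{\mathbf{m}}(a^{3k})$, and the same bound holds for $maj$ by MacMahon equidistribution. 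The bound transfers from $S_A$ to $S_{A,i}$ by conditioning: for any non-negative $X$ we have $\mathbb{E}_{S_{A,i}}(X) \leq (n/a_i)\mathbb{E}_{S_A}(X)$, where the prefactor $n/a_i = (\sum_j m_j)/m_i$ is independent of $a$. Recentering from $\mu_{S_A}$ to $\mu_{S_{A,i}}$ costs at most an $O(a)$ shift per factor, contributing only $O(a^{2k})$ to the $2k$-th centered moment, which is absorbed into $O(a^{3k})$.

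Cauchy--Schwarz on $S_{A,i}$ then gives
\[
|\mathbb{E}_{S_{A,i}}((inv-\mu)^r(maj-\mu)^s)| \leq \sqrt{\mathbb{E}_{S_{A,i}}((inv-\mu)^{2r})\,\mathbb{E}_{S_{A,i}}((maj-\mu)^{2s})} = O_{\mathbf{m}}(a^{3(r+s)/2}),
\]
and expanding the falling factorials via Stirling numbers of the first kind exhibits $FM(\mathbf{a},i,r,s)$ as a finite linear combination of ordinary centered moments of total order at most $r+s$, each controlled by the bound above. This delivers the required estimate $|FM(a\mathbf{m},i,r,s)| = O_{\mathbf{m}}(a^{3(r+s)/2})$.

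The main obstacle is to invoke \cite{canfield} at the right level: distributional convergence is not enough, one genuinely needs the moment bound $\mathbb{E}_{S_A}((inv-\mu)^{2k}) = O(\sigma^{2k})$ for each fixed $k$. This is supplied by the moment-method proof in \cite{canfield}, though as a fallback one could derive it directly by a cumulant-style computation analogous to the one in \cite{baxter} for permutations. Once this input is in place, the transfer to $S_{A,i}$, the Cauchy--Schwarz inequality, the Stirling-number conversion, and the polynomial-density argument are all routine.
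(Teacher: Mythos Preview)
Your argument is correct and follows essentially the same route as the paper: bound the centered mixed moments by the centered marginal moments via an $L^p$-inequality, quote the $O(\sigma^{r+s})$ marginal bound from \cite{canfield}, convert to factorial moments by Stirling numbers, and read off the degree bound for $P$. The paper uses H\"older together with the equidistribution $X_{\mathbf{a},i}\stackrel{d}{=}Y_{\mathbf{a},i}$ on $S_{A,i}$ to get $|\mathbb{E}(X^rY^s)|\le\mathbb{E}(X^{r+s})$ (treating odd $r+s$ separately via Jensen), invokes the \emph{polynomial} form of the CJZ moment directly on $S_{A,i}$, and leaves the passage from the pointwise inequality to a degree bound implicit; you instead use Cauchy--Schwarz (which handles both parities uniformly), transfer the CJZ bound from $S_A$ to $S_{A,i}$ by conditioning and recentering, and make the Zariski-density step explicit. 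These are cosmetic differences rather than a genuinely different strategy.
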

\begin{proof}
Let $\textbf{a}=(a_1,\dots,a_d)$ be a vector of nonnegative integers and let $A=\{1^{a_1},\dots,d^{a_d}\}$. Define $X_{\textbf{a},i}=inv-\mu$ and $Y_{\textbf{a},i}=maj-\mu$ on $S_{A,i}$. Then
\begin{multline*}
|\mathbb{E}(X_{\textbf{a},i}^rY_{\textbf{a},i}^s)|\leq\mathbb{E}(|X_{\textbf{a},i}|^r|Y_{\textbf{a},i}|^s)\leq\mathbb{E}\big((|X_{\textbf{a},i}|^r)^{\frac{r+s}{r}}\big)^{\frac{r}{r+s}}\mathbb{E}\big((|Y_{\textbf{a},i}|^s)^\frac{r+s}{s}\big)^\frac{s}{r+s}\\
=\mathbb{E}(|X_{\textbf{a},i}|^{r+s})=\mathbb{E}(X_{\textbf{a},i}^{r+s})
\end{multline*}
if $r+s$ is even, by Hölder's inequality. From the the paper by Canfield, Janson and Zeilberger \cite{canfield}, we know that $\mathbb{E}(X_{\textbf{a},i}^{r+s})$ is a polynomial of degree $\frac{3}{2}(r+s)$, so that if $\mathbb{E}(X_{\textbf{a},i}^rY_{\textbf{a},i}^s)$ is written as a rational function with denominator $\big(\sum_{j=1}^da_j-1\big)^{\underline{4(r+s)}}$, the degree of the numerator is at most $4(r+s)+\frac{3}{2}(r+s)=\frac{11}{2}(r+s)$. $FM(\textbf{a},i,r,s)$ can be recovered from the $\mathbb{E}(X_{\textbf{a},i}^rY_{\textbf{a},i}^s)$ using Stirling numbers \cite{graham}, so the degree of $P(\textbf{a},i,r,s)$ is also at most $\frac{11}{2}(r+s)$.
If $r+s$ is odd, 
\begin{displaymath}
\left|\mathbb{E}(X_{\textbf{a},i}^rY_{\textbf{a},i}^s)\right|\leq\mathbb{E}(|X_{\textbf{a},i}|^{r+s})\leq\mathbb{E}(|X_{\textbf{a},i}|^{r+s+1})^{\frac{r+s}{r+s+1}}=\mathbb{E}(X_{\textbf{a},i}^{r+s+1})^{\frac{r+s}{r+s+1}}
\end{displaymath}
by Jensen's inequality, so the degree of $P(\textbf{a},i,r,s)$ is at most $$4(r+s)+\frac{3}{2}(r+s+1)\frac{r+s}{r+s+1}=\frac{11}{2}(r+s)\text{.}$$
\end{proof}

\textbf{Remark} If $r+s$ is odd, $\frac{11}{2}(r+s)$ is not an integer, so the degree of $P(\textbf{a},i,r,s)$ is strictly less than $\frac{11}{2}(r+s)$.\\

For $r+s$ even, we substitute $FM(\textbf{a},i,r,s)=\frac{P(\textbf{a},i,r,s)}{(\sum_{j=1}^da_j-1)^{\underline{4(r+s)}}}$ in (3) and multiply by the common denominator $\big(\sum_{j=1}^da_j-1\big)^{\underline{4(r+s)}}$ to obtain the polynomial identity
\begin{multline}
\Big(\sum_{j=1}^da_j-4(r+s)\Big)P(\textbf{a}+\textbf{e}_i,i,r,s)-\sum_{j=1}^da_jP(\textbf{a},j,r,s)\\
=\sum_{j=1}^ia_j\Bigg[\Big(\sum_{k=1}^da_k\Big)^4\frac{1}{2}\Big(\sum_{k=j+1}^da_k-\sum_{k=1}^{j-1}a_k\Big)rP(\textbf{a},j,r-1,s)\\
+\Big(\sum_{k=1}^da_k\Big)^4\frac{1}{2}\Big(\sum_{k=j+1}^{i}a_k-\sum_{k=1}^{j-1}a_k-\sum_{k=i+1}^{d}a_k\Big)sP(\textbf{a},j,r,s-1)\\
+\Big(\sum_{k=1}^da_k\Big)^8\frac{1}{2}\bigg(\frac{1}{2}\Big(\sum_{k=j+1}^da_k-\sum_{k=1}^{j-1}a_k\Big)\bigg)^2r(r-1)P(\textbf{a},j,r-2,s)\\
+\Big(\sum_{k=1}^da_k\Big)^8\frac{1}{2}\bigg(\frac{1}{2}\Big(\sum_{k=j+1}^{i}a_k-\sum_{k=1}^{j-1}a_k-\sum_{k=i+1}^{d}a_k\Big)\bigg)^2s(s-1)P(\textbf{a},j,r,s-2)\\
+\Big(\sum_{k=1}^da_k\Big)^8\frac{1}{2}\Big(\sum_{k=j+1}^da_k-\sum_{k=1}^{j-1}a_k\Big)\frac{1}{2}\Big(\sum_{k=j+1}^{i}a_k-\sum_{k=1}^{j-1}a_k-\sum_{k=i+1}^{d}a_k\Big)rsP(\textbf{a},j,r-1,s-1)\Bigg]\\
+\sum_{j=i+1}^da_j\Bigg[\Big(\sum_{k=1}^da_k\Big)^4\frac{1}{2}\Big(\sum_{k=j+1}^da_k-\sum_{k=1}^{j-1}a_k\Big)rP(\textbf{a},j,r-1,s)\\
+\Big(\sum_{k=1}^da_k\Big)^4\frac{1}{2}\Big(\sum_{k=j+1}^{d}a_k+\sum_{k=1}^{i}a_k-\sum_{k=i+1}^{j-1}a_k\Big)sP(\textbf{a},j,r,s-1)\\
+\Big(\sum_{k=1}^da_k\Big)^8\frac{1}{2}\bigg(\frac{1}{2}\Big(\sum_{k=j+1}^da_k-\sum_{k=1}^{j-1}a_k\Big)\bigg)^2r(r-1)P(\textbf{a},j,r-2,s)\\
+\Big(\sum_{k=1}^da_k\Big)^8\frac{1}{2}\bigg(\frac{1}{2}\Big(\sum_{k=j+1}^{d}a_k+\sum_{k=1}^{i}a_k-\sum_{k=i+1}^{j-1}a_k\Big)\bigg)^2s(s-1)P(\textbf{a},j,r,s-2)\\
+\Big(\sum_{k=1}^da_k\Big)^8\frac{1}{2}\Big(\sum_{k=j+1}^da_k-\sum_{k=1}^{j-1}a_k\Big)\frac{1}{2}\Big(\sum_{k=j+1}^{d}a_k+\sum_{k=1}^{i}a_k-\sum_{k=i+1}^{j-1}a_k\Big)rsP(\textbf{a},j,r-1,s-1)\Bigg]\\
+\text{lower order terms.}
\end{multline}
Here the lower order terms have degree at most $\frac{11}{2}(r+s)-1$. \\
Let us now set up a recurrence that is satisfied by the mixed moments of the bivariate normal distribution $(X,Y)$, with $\mathbb{E}(X)=\mathbb{E}(Y)=0$, $$Var(X)=Var(Y)=\sum_{1\leq i<	j\leq d}\big(a_ia_j^2+a_i^2a_j\big)+2\sum_{1\leq i<j<k\leq d}a_ia_ja_k\text{ and}$$ $$Cov(X,Y)=\sum_{1\leq i<j\leq d}a_ia_j^2-\sum_{1\leq i<j\leq d}a_i^2a_j\text{.}$$
Recall that the mixed moments of a multivariate normal distribution are given by Isserlis' Theorem \cite{isserlis}:
\begin{displaymath}
\mathbb{E}(X_1\cdots X_n)=\sum_P\prod_{(i,j)\in P}\mathbb{E}(X_iX_j)\text{,}
\end{displaymath}
where the sum is over all perfect matchings of $[n]$, which holds if $X_1,\ldots,X_n$ are jointly normally distributed with zero mean. In this case, this means that
\begin{equation}
\mathbb{E}(X^rY^s)=\sum_P\mathbb{E}(X^2)^{\left|P|_{[r]}\right|}\mathbb{E}(Y^2)^{\left|P|_{[s]}\right|}\mathbb{E}(XY)^{\left|P|_{K_{r,s}}\right|}\text{,}
\end{equation}
where the sum is over all perfect matchings of $[r]\amalg[s]$, $P|_{[r]}$ is the set of edges of $P$ within $[r]$, $P|_{[s]}$ is the set of edges of $P$ within $[s]$, and $P|_{K_{r,s}}$ is the set of edges of $P$ that have a vertex in $[r]$ and $[s]$ each. Now double count perfect matchings of $[r]\amalg[s]$ with the above weighting and a single vertex coloured red: On the one hand, one can pick a perfect matching and then choose a vertex to colour red. On the other hand, one can pick a vertex to colour red, then choose a vertex to match to it, and afterwards take a perfect matching of the rest. This leads to the formula
\begin{multline*}
(r+s)\mathbb{E}(X^rY^s)=\mathbb{E}(X^2)r(r-1)\mathbb{E}(X^{r-2}Y^s)\\
+\mathbb{E}(Y^2)s(s-1)\mathbb{E}(X^rY^{s-2})\\
+\mathbb{E}(XY)2rs\mathbb{E}(X^{r-1}Y^{s-1})\text{.}
\end{multline*}
\\
Now that we have these two recurrences, we can prove the following fact.
\begin{claim}
For $r+s$ even, the leading terms of $P(\textbf{a},i,r,s)$ are the same as those of $\big(\sum_{j=1}^da_j\big)^{4(r+s)}\mathbb{E}(X^rY^s)$, where $(X,Y)$ is distributed as above, for all $i\in[d]$.
\end{claim}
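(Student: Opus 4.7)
The plan is to prove the claim by induction on $r+s$, restricted to $r+s$ even, extracting the leading-degree behaviour of the polynomial identity (4) and matching it against the Gaussian moment recurrence (5). The base case $r=s=0$ is immediate: $P(\textbf{a},i,0,0)=1=\big(\sum_k a_k\big)^0\mathbb{E}(1)$.

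For the inductive step, assume the claim for all even $r'+s'<r+s$, and write $P_{top}(\textbf{a},i,r,s)$ for the degree-$\tfrac{11(r+s)}{2}$ part of $P(\textbf{a},i,r,s)$. Since the right-hand side of (4) has degree at most $\tfrac{11(r+s)}{2}$, the degree-$(\tfrac{11(r+s)}{2}+1)$ part of the left-hand side must vanish. Because $P(\textbf{a}+\textbf{e}_i,i)$ agrees with $P(\textbf{a},i)$ at top degree (Taylor-shift contributions are strictly lower degree) and the $-4(r+s)$ is a scalar, this vanishing yields $(\sum_j a_j)P_{top}(\textbf{a},i,r,s)=\sum_j a_jP_{top}(\textbf{a},j,r,s)$, which forces $P_{top}(\textbf{a},i,r,s)$ to be independent of $i$; denote the common value by $P_{top}(\textbf{a},r,s)$.

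Next, extract the degree-$\tfrac{11(r+s)}{2}$ part of (4) and average over $i$ with weights $a_i/(\sum_k a_k)$. On the left-hand side, the subleading-degree parts $P^{(1)}(\textbf{a},i)$ and $P^{(1)}(\textbf{a},j)$ cancel under this averaging (the weighted $i$-sum and the $j$-sum exactly coincide), and Euler's identity $\sum_i a_i\partial_iP_{top}=\tfrac{11(r+s)}{2}P_{top}$ applied to the homogeneous $P_{top}$ reduces the averaged LHS to $\bigl(\tfrac{11(r+s)}{2}-4(r+s)\bigr)P_{top}=\tfrac{3(r+s)}{2}P_{top}(\textbf{a},r,s)$. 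On the RHS, the three even contributions ($r'+s'=r+s-2$, indices $(r-2,s),(r,s-2),(r-1,s-1)$) are handled using the induction hypothesis $P_{top}(\textbf{a},j,r',s')=(\sum_k a_k)^{4(r'+s')}\mathbb{E}(X^{r'}Y^{s'})$, which is $j$-independent; the resulting combinatorial sums in $j$ and $i$ over the linear forms $L_j=\tfrac{1}{2}(\sum_{k>j}a_k-\sum_{k<j}a_k)$ and $M_{i,j}$ (the half-integer $q$-exponent forms in (4)) must be evaluated and shown to produce exactly $\mathrm{Var}(X),\mathrm{Var}(Y),\mathrm{Cov}(X,Y)$ of the auxiliary Gaussian, up to a uniform normalization. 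The two odd contributions ($r'+s'=r+s-1$) are the most subtle point: apply the top-degree vanishing argument again, now to recurrence (4) for these $(r',s')$, to establish that their top-degree parts are also $j$-independent; then the identities $\sum_j a_jL_j=0$ and $\sum_{i,j}a_ia_jM_{i,j}=0$ (both following from centering of $inv$ and $maj$, and verifiable by splitting on $j\leq i$ versus $j>i$) make these contributions vanish at leading order after averaging.

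Combining the two sides and invoking the Gaussian moment recurrence (5) then matches $\tfrac{3(r+s)}{2}P_{top}(\textbf{a},r,s)$ with a multiple of $(r+s)(\sum_k a_k)^{4(r+s)}\mathbb{E}(X^rY^s)$, closing the induction. The main obstacle is the explicit evaluation of the combinatorial sums $\sum_j a_jL_j^2$, $(\sum_k a_k)^{-1}\sum_{i,j}a_ia_jM_{i,j}^2$, and $(\sum_k a_k)^{-1}\sum_{i,j}a_ia_jL_jM_{i,j}$ and careful tracking of the universal numerical constants through the averaging step and the moment recurrence so that the claimed form of $P_{top}$ emerges; a secondary obstacle is establishing the vanishing identities needed to kill the odd-total contributions, which requires patient case analysis of the piecewise-defined $M_{i,j}$.
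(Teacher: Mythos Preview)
Your proposal is correct and follows essentially the same route as the paper: induction on $r+s$, weight equation (4) by $a_i$ and sum (your ``averaging''), use the Taylor shift together with Euler's identity to turn the left-hand side into $\tfrac{3}{2}(r+s)P_{top}$, and match the right-hand side against the Isserlis recurrence for the auxiliary Gaussian. Two small points of comparison: your derivation of $i$-independence of $P_{top}$ directly from the vanishing of the degree-$\tfrac{11(r+s)}{2}+1$ part of (4) is slightly cleaner than the paper's, which first runs a contradiction argument via (6) to pin down the degree; and you are more explicit than the paper about why the odd contributions $(r-1,s)$ and $(r,s-1)$ drop out, whereas the paper absorbs this into ``routine calculation'' and an implicitly stronger induction hypothesis. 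One minor sharpening you should make: for the $j$-independence of the odd-case $P_{top}$ you actually need the pointwise identity $\sum_j a_j M_{i,j}=0$ for every $i$ (which does hold, via $M_{i,j}=L_j-A_{>i}$ for $j\le i$ and $M_{i,j}=L_j+A_{\le i}$ for $j>i$), not merely the averaged identity $\sum_{i,j}a_ia_jM_{i,j}=0$ that you cite.
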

\begin{proof}
We proceed by induction on $r+s$. The result is true for $r=s=0$. If $r+s>0$, consider the recurrence (4). Multiply this equation by $a_i$ and sum over all $i\in[d]$, simplifying the right-hand side using the fact that, by induction hypothesis, the leading terms of $P(\textbf{a},j,r',s')$ do not depend on $j$ for $r'+s'<r+s$. After some amount of routine calculation, the result is
\begin{multline}
\sum_{i=1}^da_i\bigg(\Big(\sum_{j=1}^da_j-4(r+s)\Big)P(\textbf{a}+\textbf{e}_i,i,r,s)-\sum_{j=1}^da_jP(\textbf{a},j,r,s)\bigg)\\
=\frac{1}{8}\Big(\sum_{k=1}^da_k\Big)^9\Big(\sum_{1\leq i<	j\leq d}\big(a_i^2a_j+a_ia_j^2\big)+2\sum_{1\leq i<j<k\leq d}a_ia_ja_k\Big)r(r-1)P(\textbf{a},l,r-2,s)\\
+\frac{1}{8}\Big(\sum_{k=1}^da_k\Big)^9\Big(\sum_{1\leq i<	j\leq d}\big(a_i^2a_j+a_ia_j^2\big)+2\sum_{1\leq i<j<k\leq d}a_ia_ja_k\Big)s(s-1)P(\textbf{a},l,r,s-2)\\
+\frac{1}{8}\Big(\sum_{k=1}^da_k\Big)^9\Big(\sum_{1\leq i<j\leq d}a_ia_j^2-\sum_{1\leq i<j\leq d}a_i^2a_j\Big)2rsP(\textbf{a},l,r-1,s-1)\\
+\text{lower order terms,}
\end{multline}
for any $l\in[d]$. Notice that all the terms involving $P(\textbf{a},i,r-1,s)$ or $P(\textbf{a},i,r,s-1)$ conveniently cancel out.\\
\\
Now, by induction hypothesis, the right-hand side of (6) has degree $\frac{11}{2}(r+s)+1$. Suppose for contradiction that the degree of $P(\textbf{a},i,r,s)$ is less than $\frac{11}{2}(r+s)$, for all $i\in[d]$. The leading terms of the left-hand side cancel out, so the degree of the left-hand side is less than $\frac{11}{2}(r+s)+1$, which is absurd. Thus there exists $i_0\in[d]$ such that the degree of $P(\textbf{a},i_0,r,s)$ is exactly $\frac{11}{2}(r+s)$.
The degree of the right-hand side of (4) is at most $\frac{11}{2}(r+s)$, thus the leading terms of the left-hand side have to cancel out for $i=i_0$. By substracting (4) for $i=i_0$ from the same equation for $i=i_1\neq i_0$, we see that the leading terms of $P(\textbf{a},i,r,s)$ do not depend on $i$, so in particular the degree of $P(\textbf{a},i,r,s)$ is $\frac{11}{2}(r+s)$ for all $i\in[d]$. Recall that, for a polynomial $f(\textbf{a})=f(a_1,\dots,a_d)$,
\begin{displaymath}
f(\textbf{a}+\textbf{e}_i)=f(\textbf{a})+\frac{\partial}{\partial a_i}f(\textbf{a})+\text{lower order terms}
\end{displaymath}
and
\begin{displaymath}
\sum_{i=1}^da_i\frac{\partial}{\partial a_i}f(\textbf{a})=\deg(f)f(\textbf{a})+\text{lower order terms.}
\end{displaymath}
From these two observations, and (6), there results
\begin{multline*}
\Big(\sum_{i=1}^da_i\Big)\bigg(\frac{11}{2}(r+s)-4(r+s)\bigg)P(\textbf{a},l,r,s)\\
\begin{aligned}
=\frac{1}{8}\Big(\sum_{k=1}^da_k\Big)^9&\Big(\sum_{1\leq i<	j\leq d}\big(a_i^2a_j+a_ia_j^2\big)+2\sum_{1\leq i<j<k\leq d}a_ia_ja_k\Big)\\
&\cdot sr(r-1)P(\textbf{a},l,r-2,s)\\
+\frac{1}{8}\Big(\sum_{k=1}^da_k\Big)^9&\Big(\sum_{1\leq i<	j\leq d}\big(a_i^2a_j+a_ia_j^2\big)+2\sum_{1\leq i<j<k\leq d}a_ia_ja_k\Big)\\
&\cdot s(s-1)P(\textbf{a},l,r,s-2)\\
+\frac{1}{8}\Big(\sum_{k=1}^da_k\Big)^9&\Big(\sum_{1\leq i<j\leq d}a_ia_j^2-\sum_{1\leq i<j\leq d}a_i^2a_j\Big)2rsP(\textbf{a},l,r-1,s-1)\\
&\span+\text{lower order terms,}\\
\end{aligned}
\end{multline*}
for all $l\in[d]$.
This gives that, for all $l\in[d]$,
\begin{multline*}
(r+s)P(\textbf{a},l,r,s)\\
\begin{aligned}
=\frac{1}{12}\Big(\sum_{k=1}^da_k\Big)^8&\Big(\sum_{1\leq i<j\leq d}\big(a_i^2a_j+a_ia_j^2\big)+2\sum_{1\leq i<j<k\leq d}a_ia_ja_k\Big)\\
&\cdot r(r-1)P(\textbf{a},l,r-2,s)\\
+\frac{1}{12}\Big(\sum_{k=1}^da_k\Big)^8&\Big(\sum_{1\leq i<j\leq d}\big(a_i^2a_j+a_ia_j^2\big)+2\sum_{1\leq i<j<k\leq d}a_ia_ja_k\Big)\\
&\cdot s(s-1)P(\textbf{a},l,r,s-2)\\
+\frac{1}{12}\Big(\sum_{k=1}^da_k\Big)^8&\Big(\sum_{1\leq i<j\leq d}a_ia_j^2-\sum_{1\leq i<j\leq d}a_i^2a_j\Big)2rsP(\textbf{a},l,r-1,s-1)\\
&\span+\text{lower order terms.}\\
\end{aligned}
\end{multline*}
By induction hypothesis, this is further equal to
\begin{multline*}
\begin{aligned}
\frac{1}{12}\Big(\sum_{k=1}^da_k\Big)^8&\Big(\sum_{1\leq i<j\leq d}\big(a_i^2a_j+a_ia_j^2\big)+2\sum_{1\leq i<j<k\leq d}a_ia_ja_k\Big)\\
&\cdot r(r-1)\Big(\sum_{k=1}^da_k\Big)^{4(r-2+s)}
\mathbb{E}(X^{r-2}Y^s)\\
+\frac{1}{12}\Big(\sum_{k=1}^da_k\Big)^8&\Big(\sum_{1\leq i<j\leq d}\big(a_i^2a_j+a_ia_j^2\big)+2\sum_{1\leq i<j<k\leq d}a_ia_ja_k\Big)\\
&\cdot s(s-1)\Big(\sum_{k=1}^da_k\Big)^{4(r+s-2)}
\mathbb{E}(X^rY^{s-2})\\
+\frac{1}{12}\Big(\sum_{k=1}^da_k\Big)^8&\Big(\sum_{1\leq i<j\leq d}a_ia_j^2-\sum_{1\leq i<j\leq d}a_i^2a_j\Big)\\
&\cdot 2rs\Big(\sum_{k=1}^da_k\Big)^{4(r-1+s-1)}\mathbb{E}(X^{r-1}Y^{s-1})\\
&\span+\text{lower order terms.}\\
&=(r+s)\Big(\sum_{k=1}^da_k\Big)^{4(r+s)}\mathbb{E}(X^rY^s)+\text{lower order terms.}
\end{aligned}
\end{multline*}
\end{proof}
To conclude the proof of the main theorem, if $\textbf{m}=(m_1,\dots,m_d)$ is a vector of nonnegative integers, and $A=\{1^{m_1a},2^{m_2a},\dots,d^{m_da}\}$, then, on $S_{A,i}$,
\begin{multline*}
\mathbb{E}\left(\left(\frac{inv-\mu}{\sigma}\right)^r\left(\frac{maj-\mu}{\sigma}\right)^s\right)=\frac{\mathbb{E}((inv-\mu)^r(maj-\mu)^s)}{\sigma^{r+s}}\\
=\frac{FM(a\textbf{m},i,r,s)}{\sigma^{r+s}}+o(1)
=\frac{\mathbb{E}(X^rY^s)}{\sigma^{r+s}}+o(1)=\mathbb{E}\left(\left(\frac{X}{\sigma}\right)^r\left(\frac{Y}{\sigma}\right)^s\right)+o(1)\text{ as }a\rightarrow\infty\text{,}
\end{multline*}
for $r+s$ even and $(X,Y)$ distributed as above.. For $r+s$ odd, the degree of $P(a\textbf{m},i,r,s)$ is less than $\frac{11}{2}(r+s)$, so 
\begin{multline*}
\mathbb{E}\left(\left(\frac{inv-\mu}{\sigma}\right)^r\left(\frac{maj-\mu}{\sigma}\right)^s\right)=\frac{\mathbb{E}((inv-\mu)^r(maj-\mu)^s)}{\sigma^{r+s}}\\
=\frac{FM(a\textbf{m},i,r,s)}{\sigma^{r+s}}+o(1)=0+o(1)=\mathbb{E}\left(\left(\frac{X}{\sigma}\right)^r\left(\frac{Y}{\sigma}\right)^s\right)+o(1)\text{ as }a\rightarrow\infty\text{.}
\end{multline*}
Now $(\frac{X}{\sigma},\frac{Y}{\sigma})$ does not depend on $a$, so $\mathbb{E}\big(\big(\frac{inv-\mu}{\sigma}\big)^r\big(\frac{maj-\mu}{\sigma}\big)^s\big)\rightarrow\mathbb{E}\big(\big(\frac{X}{\sigma}\big)^r\big(\frac{Y}{\sigma}\big)^s\big)$ for all $r$, $s$.
Thus $\big(\frac{inv-\mu}{\sigma},\frac{maj-\mu}{\sigma}\big)$ on $S_{A,i}$ tends to $\big(\frac{X}{\sigma},\frac{Y}{\sigma}\big)$ in distribution as $a\rightarrow\infty$. A fortiori, the same is true on $S_A$. This is a bivariate normal distribution, with correlation coefficient $$\mathbb{E}\left(\frac{X}{\sigma}\frac{Y}{\sigma}\right)=\frac{\mathbb{E}(XY)}{\sigma^2}=\frac{\sum_{1\leq i<j\leq d}m_im_j^2-\sum_{1\leq i<j\leq d}m_i^2m_j}{\sum_{1\leq i<j\leq d}(m_im_j^2+m_i^2m_j)+2\sum_{1\leq i<j<k\leq d}m_im_jm_k}\text{.}$$ This completes the proof.
\section{Discussion}
\begin{corollary}
If $Cov(X,Y)=0$, that is $$\sum_{1\leq i<j\leq d}m_im_j^2-\sum_{1\leq i<j\leq d}m_i^2m_j=0\text{,}$$ then $inv$ and $maj$ are asymptotically independent. This happens in particular if all the $m_i$ are equal.
\end{corollary}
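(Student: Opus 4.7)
The plan is to combine the main theorem with the classical fact that a centered bivariate normal distribution with vanishing correlation is a product of its marginals. No new analysis of $inv$ or $maj$ is required; the corollary is essentially a restatement of the main theorem in a degenerate case of the parameter $\rho$.

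First I would invoke the main theorem: it guarantees that $\bigl(\frac{inv-\mu}{\sigma},\frac{maj-\mu}{\sigma}\bigr)$ converges jointly in distribution on $S_A$ to $(X/\sigma,Y/\sigma)$, a centered bivariate normal pair with unit marginal variances and correlation coefficient equal to the fraction $\rho$ displayed in the theorem. Under the hypothesis of the corollary the numerator of $\rho$ vanishes, so $\rho=0$.

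Next I would appeal to the standard fact that a centered bivariate normal with zero correlation has joint density that factors as the product of the two univariate standard normal densities, hence its components are independent random variables. Combined with the previous step, this shows that the limiting joint law of $\bigl(\frac{inv-\mu}{\sigma},\frac{maj-\mu}{\sigma}\bigr)$ is a product measure, which is precisely the statement that $inv$ and $maj$ are asymptotically independent. For the final sentence of the corollary, I would rewrite the numerator of $\rho$ as $\sum_{1\leq i<j\leq d}m_i m_j(m_j-m_i)$; if all $m_i$ coincide then every summand vanishes individually, the hypothesis is automatically satisfied, and asymptotic independence follows.

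There is no genuine obstacle here: the corollary is an immediate consequence of the main theorem together with the equivalence ``uncorrelated $\Longleftrightarrow$ independent'' for jointly normal variables. The only thing worth emphasizing is that this equivalence is special to the Gaussian setting, so it is essential that the main theorem produces convergence to a bivariate normal limit and not merely convergence of the covariance.
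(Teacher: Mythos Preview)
Your proposal is correct and is exactly the reasoning the paper intends: the corollary is stated without proof and is meant to follow immediately from the main theorem together with the Gaussian fact that zero correlation implies independence. Your factorization $\sum_{i<j}m_im_j(m_j-m_i)$ for the ``all $m_i$ equal'' case is the obvious (and only) thing to check, so nothing is missing.
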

Also consider the special case $d=2$. In this case, the leading terms of $P(\textbf{a},i,r,s)$ have a particularly simple form.
To see this, recall from (5) that
\begin{multline*}
\begin{aligned}
\mathbb{E}(X^rY^s)&=\sum_P\mathbb{E}(X^2)^{\left|P|_{[r]}\right|}\mathbb{E}(Y^2)^{\left|P|_{[s]}\right|}\mathbb{E}(XY)^{\left|P|_{K_{r,s}}\right|}\\
&=\sum_{j=0}^r\sum_{\left\{P\big|\left|P|_{K_{r,s}}\right|=j\right\}}\mathbb{E}(X^2)^{\frac{r+s}{2}-j}\mathbb{E}(XY)^j
\end{aligned}
\end{multline*}
Now if $d=2$, $Var(X)=Var(Y)=\frac{ab(a+b)}{12}$ and $Cov(X,Y)=\frac{ab(b-a)}{12}$, so for $r+s$ even, this is equal to
\begin{multline*}
\sum_{\{j\in[r]\mid r-j even\}}\frac{s^{\underline{j}}r^{\underline{j}}}{j!}(r-j-1)!!(s-j-1)!!\left(\frac{ab(a+b)}{12}\right)^{\frac{r+s}{2}-j}\left(\frac{ab(b-a)}{12}\right)^j\\
\begin{aligned}
=r^{\underline{s}}(r-s-1)!&!\left(\frac{ab}{12}\right)^{\frac{r+s}{2}}(b-a)^s(a+b)^{\frac{r-s}{2}}\\
&\cdot{}_2F_1\left(-s/2,-s/2+1/2;(r-s)/2+1;\left(\frac{a+b}{b-a}\right)^2\right)\\
=r^{\underline{s}}(r-s-1)!&!\left(\frac{ab}{12}\right)^{\frac{r+s}{2}}(-2a)^s(a+b)^{\frac{r-s}{2}}\\
&\cdot{}_2F_1\left(-s,(r-s)/2+1/2;r-s+1;\frac{a+b}{a}\right)\\
=\left(\frac{ab}{12}\right)^\frac{r+s}{2}(a+&b)^\frac{r-s}{2}b^s(r+s-1)!!\\
&\cdot{}_2F_1(-s,(r-s)/2+1/2;-(r+s)/2+1/2;-a/b)\\
=\left(\frac{ab}{12}\right)^\frac{r+s}{2}(a+&b)^\frac{r-s}{2}\\
&\cdot\sum_{j=0}^s\frac{\binom{s}{j}}{(r-s-1)!!}(r+s-2j-1)!!(r-s+2j-1)!!(-a)^jb^{s-j}\text{.}\\
\end{aligned}
\end{multline*}
Here we used the hypergeometric identities (\cite{rahman}, (5.10)) (\cite{slater}, (1.8.10))
\begin{displaymath}
{}_2F_1(a,a+1/2;b+1/2;z^2)=(1-z)^{-2a}{}_2F_1\left(2a,b;2b;\frac{2z}{z-1}\right)\text{, and}
\end{displaymath}
\begin{displaymath}
{}_2F_1(a,-n;c;z)=(1-z)^n\frac{(a)_n}{(c)_n}{}_2F_1(c-a,-n;1-a-n;1-z)\text{,}
\end{displaymath}
and assumed, without loss of generality, that $r\geq s$. In particular, the degree of $\mathbb{E}(X^rY^s)$ is $\frac{3}{2}(r+s)$, a fact already implicitly used in the proof of \textbf{Claim 1}.

\end{document}